\theoremstyle{plain}
\newtheorem{theorem}{Theorem}
\newtheorem{lemma}[theorem]{Lemma}
\newtheorem{prop}[theorem]{Proposition}
\newtheorem{remark}[theorem]{Remark}
\theoremstyle{definition}
\newtheorem{definition}[theorem]{Definition}
\newcommand{\Z}{\mathbb{Z}}
\newcommand{\diam}{{\rm{diam}}}
\newcommand{\lcm}{{\rm{lcm}}}
\newtheoremstyle{indented}{3pt}{3pt}{\addtolength{\leftskip}{2.5em}}{}{\bfseries}{.}{.5em}{}
\theoremstyle{indented}
\title{Radio Number of Hamming Graphs of Diameter 3}
\author{Jason DeVito}
\author{Amanda Niedzialomski}
\author{Jennifer Warren}
\begin{document}
\maketitle

\begin{abstract}
For $G$ a simple, connected graph, a vertex labeling $f:V(G)\to \Z_+$ is called a \emph{radio labeling of $G$} if it satisfies $|f(u)-f(v)|\geq\diam(G)+1-d(u,v)$ for all distinct vertices $u,v\in V(G)$.  The \emph{radio number of $G$} is the minimal span over all radio labelings of $G$.  If a bijective radio labeling onto $\{1,2,\dots,|V(G)|\}$ exists, $G$ is called a \emph{radio graceful} graph.  We determine the radio number of all diameter 3 Hamming graphs and show that an infinite subset of them is radio graceful.
\end{abstract}

\section{Introduction}
In this paper we compute radio numbers of the Hamming graphs $K_{\ell}\square K_m\square K_n$, where $\ell,m,n\geq 2$ and $K_n$ denotes the complete graph with $n$ vertices. We show that these graphs are radio graceful unless $\ell=m=2$ or $(\ell,m,n)=(2,3,3)$. This produces an infinite family of non-trivial radio graceful graphs. The first such families were given in \cite{N16}, where the third named author considers the Hamming graphs of the form $K_{n_1}\square K_{n_2}\square\cdots\square K_{n_d}$ where $n_1=n_2=\dots=n_d$ and, separately, where $n_1,n_2,\dots, n_d$ are pairwise relatively prime, and constructs consecutive radio labelings for these types of graphs in certain cases. We will use a similar technique to define consecutive radio labelings for all radio graceful Hamming graphs of diameter 3.

For a simple, connected graph $G$ with vertex set $V(G)$ and a positive integer $k$, we call a vertex labeling $f:V(G)\to\Z_+$ a $k$-radio labeling if it satisfies
\begin{equation*}
|f(u)-f(v)|\geq k+1-d(u,v)
\end{equation*}
for all distinct $u,v\in V(G)$.  This definition, given in 2001 in \cite{Chartrand}, encompasses some previously defined labelings, including vertex coloring, which is equivalent to $1$-radio labeling.  Other $k$-radio labelings that are studied include $L(2,1)$-labeling ($2$-radio labeling, \cite{GR92}), $L(3,2,1)$-labeling ($3$-radio labeling, \cite{321}), and radio labeling ($k$-radio labeling with maximum\footnote{The $k$ value of a $k$-radio labeling has a natural relationship to distance in $G$.  Namely, vertices of distance $k$ apart or less must have distinct images under $f$.  We therefore consider $k\leq\diam(G)$.} $k$ of the diameter of $G$, \cite{CEHZ}).  These labelings have historical ties to the problem of optimally assigning radio frequencies to transmitters in order to avoid interference between transmitters.  This so-called Channel Assignment Problem was framed as a graph labeling problem by Hale in 1980 in \cite{Hale}, by modeling transmitters and their frequency assignments with vertices of a graph and a labeling of them.  The relevance of $k$-radio labeling to this problem is clear; a pair of vertices (transmitters) with a relatively small distance must have a relatively large difference in labels (frequencies).  The original application is no longer the central motivation for studying $k$-radio labeling, and we do not limit our scope to only graphs relevant to this model. In this paper, we work within the framework of radio labeling, where $k=\diam(G)$.

\begin{definition}
Let $G$ be a simple, connected graph.  A vertex labeling $f:V(G)\to\Z_+$ is a \emph{radio labeling of $G$} if it satisfies
\begin{equation}\label{introrc}
|f(u)-f(v)|\geq\diam(G)+1-d(u,v)
\end{equation}
for all distinct $u,v\in V(G)$.
\end{definition}

Inequality \eqref{introrc} is called the \emph{radio condition}.  The largest element in the range of $f$ is called the \emph{span of $f$}.

\begin{definition}
Let $G$ be a simple, connected graph.  The minimal span over all radio labelings of $G$ is the \emph{radio number of $G$}, denoted ${\textrm{rn}}(G)$.
\end{definition}

\begin{remark}
We use the codomain of $\Z_+$ for radio labeling (and $k$-radio labeling), while some authors use a codomain the $\Z_+\cup\{0\}$.  Radio numbers and labelings are converted from one convention to the other by a shift of 1.
\end{remark}

\begin{remark}
If a graph $G$ has diameter 3, the definitions for a radio labeling of $G$ and an $L(3,2,1)$-labeling of $G$ are identical; the radio number of $G$ and the analogous $L(3,2,1)$-number of $G$ are equal.  This is the case for the graphs we consider in this paper, so the results are relevant to both labelings.\end{remark}

Unlike $k$-radio labeling with $k<\diam(G)$, radio labeling is an injective labeling, and therefore ${\textrm{rn}}(G)\geq|V(G)|$.  We are interested in graphs $G$ for which ${\textrm{rn}}(G)=|V(G)|$, which occurs when there exists a radio labeling $f$ with image $\{1,2,\dots,|V(G)|\}$.  We call these graphs \emph{radio graceful}, first named in \cite{SR}.

\begin{definition}
A radio labeling $f$ of a graph $G$ is a \emph{consecutive radio labeling of $G$} if $f(V(G))=\{1,2,\dots,|V(G)|\}$.  A graph for which a consecutive radio labeling exists is called \emph{radio graceful}. 
\end{definition}

The complete graphs $K_n$ are trivially radio graceful; as $\diam(K_n)\leq1$, the radio condition is satisfied for any injective vertex labeling of $K_n$.  Then any vertex labeling that maps $V(K_n)$ onto $\{1,2,\dots, n\}$ is automatically a consecutive radio labeling of $K_n$.  Radio graceful graphs with diameter larger than 1 are nontrivial examples.  The higher the diameter of a graph, the more restrictive the requirement to have an image of consecutive integers is  (see Proposition \ref{prelims_rgc_prop}). Examples of radio graceful graphs are sought, and in our study here of Hamming graphs of diameter 3, an infinite family of examples is found. More precisely, the main results of this paper state the following.

\begin{theorem} Suppose $2\leq \ell\leq m\leq n$.  Then the Hamming graph $K_{\ell}\square K_m\square K_n$ is radio graceful unless $\ell = m = 2$ or $(\ell,m,n) = (2,3,3)$.
\end{theorem}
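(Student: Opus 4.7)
My plan is to recast radio gracefulness as a combinatorial constraint on vertex orderings, dispose of the two exceptional families by elementary forcing arguments, and construct explicit consecutive labelings in the remaining cases via a coset-and-jump argument on the abelian group $G=\Z_\ell\times\Z_m\times\Z_n$.

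Identifying vertices with elements of $G$, graph distance is the number of coordinates in which two vertices disagree, so $\diam=3$ and a consecutive radio labeling is precisely a bijection $i\mapsto v_i$ from $\{1,\ldots,\ell m n\}$ onto $G$ whose successive differences $\delta_i:=v_{i+1}-v_i$ all have three nonzero coordinates and whose two-step sums $\delta_i+\delta_{i+1}$ each have at most one zero coordinate. For non-existence, in $K_2\square K_2\square K_n$ every $\delta_i$ must be $(1,1,\ast)$, so $\delta_i+\delta_{i+1}=(0,0,\ast)$, failing the two-step condition. For $K_2\square K_3\square K_3$ the same forcing makes the first coordinate alternate with period $2$, so $v_i$ and $v_{i+2}$ already agree in the first slot; the two-step condition then forces each of the second and third coordinate sequences to have every three consecutive entries distinct, which, since only three values are available, forces each to be periodic of period $3$. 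The whole sequence then has period $\lcm(2,3)=6$ and visits at most six vertices, contradicting $|V|=18$.

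For existence, I would begin with the uniform step $\delta=(1,1,1)$: this is always admissible, since $(2,2,2)$ has a zero coordinate only in slots of modulus $2$, and at most one such slot exists in the allowed cases. This step traces the cyclic subgroup $H=\langle(1,1,1)\rangle\le G$ of order $L=\lcm(\ell,m,n)$; when $L=\ell m n$ (pairwise coprime) a single orbit suffices and recovers the construction of \cite{N16}. When $L<\ell m n$, I would partition $G$ into its $|G|/L$ cosets of $H$ and splice successive orbits together by inserting, at each of the $|G|/L-1$ seams, a ``jump'' $\sigma$ in place of $(1,1,1)$, chosen so that $\sigma$ has all coordinates nonzero, so that $\sigma+(1,1,1)$ has at most one zero coordinate (this handles the two-step condition straddling the seam), and so that the induced sequence of cosets visits each element of $G/H$ exactly once. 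The cases $\ell=m=n$ and pairwise coprime are already in \cite{N16}; the new work is the mixed-gcd cases, where $G/H$ may fail to be cyclic (for example $G/H\cong\Z_4\times\Z_2$ when $(\ell,m,n)=(2,4,4)$) and several distinct jumps are needed to trace a Hamiltonian path in $G/H$.

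The main obstacle is this mixed-gcd case analysis: the two-step condition at each seam is tight when a modulus equals $2$ or $3$, so admissible jumps are scarce, and one must simultaneously ensure that the chain of cosets is a Hamiltonian path in $G/H$. I would split into subcases by the gcd pattern of $(\ell,m,n)$ and exhibit an explicit jump sequence in each, paying closest attention to cases near the excluded configurations---particularly $(2,m,n)$ with $\gcd(m,n)>1$---where the slack in the $\Z_2$ coordinate is smallest.
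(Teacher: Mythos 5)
Your reduction to conditions on the successive differences $\delta_i\in\Z_\ell\times\Z_m\times\Z_n$ is exactly right (it is the paper's Proposition \ref{prelims_rgc_prop} in additive clothing), and your non-existence arguments for the two exceptional families are correct and essentially identical to the paper's: for $K_2\square K_2\square K_n$ the forced $(1,1,\ast)$ steps kill the two-step condition, and for $K_2\square K_3\square K_3$ the forced periodicities $2,3,3$ give $v_7=v_1$. Your existence strategy --- follow the subgroup $H=\langle(1,1,1)\rangle$ of order $\lcm(\ell,m,n)$ and splice its cosets with jumps --- is also precisely the paper's construction: each matrix $A^{(k)}$ is a coset of $H$ traversed by the step $(1,1,1)$, and the seams use the jumps $(1,1,2)$ and $(1,2,1)$ (equivalently $(0,0,1)$ and $(0,1,0)$ modulo $H$).

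The genuine gap is that the existence half stops at the point where the real work begins. You never specify the jump sequence, and the two things you defer are exactly the content of the paper's Sections 3.1--3.3: (i) proving that the chosen sequence of jumps visits every coset of $H$ exactly once, which is not a routine Hamiltonian-path assertion --- the paper needs its seed analysis (Propositions \ref{seedsdistinct}--\ref{norepseed}, resting on the parameter $\lambda=n\,\lcm(\ell,m)/\lcm(\ell,m,n)$ and a divisibility argument) to rule out coset repetition; and (ii) verifying the two-step condition at the seams, which genuinely fails for some natural jump choices --- e.g.\ when $\ell=2$, $m=3$ the jump $(1,2,1)$ yields the two-step difference $(2,3,2)=(0,0,2)$, so that jump must never be used, and the paper's construction only survives because in that case $\lambda$ exceeds the number of cosets so the $\sigma$-seam never occurs. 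Your plan to ``split into subcases by the gcd pattern and exhibit an explicit jump sequence in each'' is an announcement of the theorem's hardest step rather than a proof of it; note also that no case split is ultimately needed, since the single recipe of applying $(0,0,1)$ at every seam except each $\lambda$-th, where $(0,1,0)$ is applied instead, works uniformly. Until the jump sequence is written down and its bijectivity and seam conditions are verified, the positive direction of the theorem is unproved.
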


In the exceptional cases, we explicitly compute the radio numbers.

\begin{theorem}  The radio number of $K_2\square K_2\square K_n$ is $6n-1 $ and the radio number of $K_2\square K_3\square K_3$ is $20 $.
\end{theorem}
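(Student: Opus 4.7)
The plan is to prove each equality by matching upper and lower bounds. For the upper bounds I would exhibit explicit radio labelings of spans $6n-1$ and $20$, respectively. For $K_2\square K_2\square K_n$, the natural decomposition into four ``slices'' $S_{a,b}=\{(a,b)\}\times\{0,\dots,n-1\}$ (each of size $n$, with pairwise internal distance $1$) suggests a labeling that cycles through the slices in the order $S_{0,0}, S_{1,1}, S_{0,1}, S_{1,0}, S_{0,0},\dots$ (antipodal, adjacent, antipodal, adjacent, \dots), alternating jumps of distance $3$ (label gap $1$) and distance $2$ (label gap $2$).  Choosing third coordinates in a systematic fashion yields a labeling with $\sum g_i=6n-2$ and span $6n-1$; one then verifies the radio condition globally, not merely on consecutive pairs.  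A similar hand construction should dispatch $K_2\square K_3\square K_3$.

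For the lower bounds I would use the standard ordering argument.  Given a radio labeling $f$, order the vertices $v_1,\dots,v_N$ by increasing label and set $g_i=f(v_{i+1})-f(v_i)$.  The radio condition gives $g_i\geq 4-d(v_i,v_{i+1})$ and, pairwise, $g_i+g_{i+1}\geq 4-d(v_i,v_{i+2})$.  Since $\mathrm{span}(f)\geq 1+\sum_{i=1}^{N-1} g_i$, it suffices to bound this sum from below.

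For $K_2\square K_2\square K_n$, the key structural fact is that distance $3$ occurs \emph{only} between antipodal slices (both of $a,b$ differing) and only when the third coordinate also differs.  Consequently, if $d(v_i,v_{i+1})=d(v_{i+1},v_{i+2})=3$, then $v_{i+2}$ lies in the same slice as $v_i$ with a different third coordinate, forcing $d(v_i,v_{i+2})=1$ and hence $g_i+g_{i+1}\geq 3$.  For every other possible pair $(d(v_i,v_{i+1}),d(v_{i+1},v_{i+2}))$, the trivial per-gap bounds $g_j\geq 4-d(v_j,v_{j+1})$ already give $g_i+g_{i+1}\geq 3$.  Summing $\sum_{i=1}^{N-2}(g_i+g_{i+1})\geq 3(N-2)=12n-6$, rewriting as $2\sum g_i-g_1-g_{N-1}\geq 12n-6$, and using $g_1+g_{N-1}\geq 2$, one obtains $\sum g_i\geq 6n-2$, hence $\mathrm{span}\geq 6n-1$.

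The $K_2\square K_3\square K_3$ case is subtler: here two consecutive distance-$3$ jumps can land at $d(v_i,v_{i+2})=2$ rather than $1$, so the clean ``every triple is $\geq 3$'' estimate breaks down.  I would instead use a finite case analysis on longer runs (quadruples or chains of consecutive $(3,3)$-triples), tailored to this $18$-vertex graph, to show $\sum g_i\geq 19$.  The main obstacle throughout is precisely this sort of refinement --- ruling out orderings that bunch together many distance-$3$ jumps whose triple endpoints sit at distance $2$, and carefully tracking the two endpoint gaps that do not participate in the triple averaging.  The upper-bound constructions, by contrast, are essentially a matter of guessing the correct combinatorial pattern from the slice structure and then verifying the radio condition, a finite check.
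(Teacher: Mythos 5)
Your treatment of $K_2\square K_2\square K_n$ is sound and is essentially the paper's argument in different bookkeeping: the key fact in both is that two consecutive label gaps must sum to at least $3$, because two successive distance-$3$ steps in $K_2\square K_2\square K_n$ force $v_i$ and $v_{i+2}$ to agree in the first two coordinates and hence sit at distance $1$ (the paper telescopes over disjoint pairs of gaps, $f(y_{4n})\geq 3(2n-1)+f(y_2)\geq 6n-1$; your overlapping-sum version gives the same bound). Your slice-cycling upper bound is also the right shape, though you should actually pin down the third coordinates and check injectivity and the $\Delta=1$ requirement that consecutive vertices differ in the third coordinate (the paper does this by an explicit induction appending eight vertices to pass from $G_n$ to $G_{n+2}$, with $G_2$ and $G_3$ as base cases). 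These are finite verifications, not missing ideas.

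The genuine gap is the lower bound $\rn(K_2\square K_3\square K_3)\geq 20$. You correctly observe that the triple estimate fails here because two consecutive distance-$3$ steps can leave $d(v_i,v_{i+2})=2$, which is exactly what the radio condition permits, so no local (pairwise or triple-wise) averaging will produce the needed $\sum g_i\geq 19$; but you then defer the entire argument to an unspecified ``finite case analysis on longer runs.'' The missing idea is a concrete structural lemma: in $K_2\square K_3\square K_3$, a maximal run of consecutively labeled vertices (all gaps equal to $1$) is \emph{completely determined} by its first two vertices --- $y_3$ is forced coordinatewise by $y_1,y_2$ since $K_2$ forces the first coordinate back and the two distance conditions eliminate both alternatives in each $K_3$ factor --- and this forced sequence satisfies $y_7=y_1$. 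Hence no $7$ vertices admit a consecutive radio labeling, so among the $18$ vertices there must be a gap of at least $2$ within the first seven positions and another within the last seven, giving two disjoint ``jumps'' and span at least $18+2=20$. Without this determinism-plus-periodicity argument (or an equivalent exhaustive check), your proposal does not establish $\sum g_i\geq 19$; note in particular that a single forced jump only gives $19$, so you must locate \emph{two} jumps in disjoint windows, which is the part your sketch does not address.
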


Hamming graphs and other Cartesian graph products have been fruitful areas of study in the $k$-radio labeling context and have been particularly useful for finding examples of radio graceful graphs.  For $k$-radio labeling results involving Cartesian graph products, see \cite{JLV}-\cite{MTWY}, \cite{N16}, and \cite{MC}.
\section{Preliminaries}
Graphs are assumed simple and connected.  We denote the distance between vertices $u$ and $v$ in a graph $G$ by $d_G(u,v)$, or, if $G$ is clear from context, by $d(u,v)$.  We use the convention that $a\ ({\rm{mod}}\ n)\in\{1,2,\dots,n\}$ throughout.

We call an ordered list of the vertices of $G$ an \emph{ordering} if it is in one-to-one correspondence with $V(G)$. If $f$ is a consecutive radio labeling of $G$, then there is an ordering $x_1,x_2,\dots,x_n$ of $V(G)$ such that $f(x_i)=i$ for all $i\in\{1,2,\dots,n\}$. The ordering contains all of the information about the consecutive radio labeling. In light of this, the next proposition follows easily from the radio condition \eqref{introrc}.

\begin{prop}\label{prelims_rgc_prop}
A graph $G$ is radio graceful if and only if there exists an ordering $x_1,x_2,\dots,x_n$ of the vertices of $G$ such that 
\begin{equation}\label{prelims_rgc}
d(x_i,x_{i+\Delta})\geq\diam(G)-\Delta+1
\end{equation}
for all $\Delta\in\{1,2,\dots,\diam(G)-1\}$, $i\in\{1,2,\dots,n-\Delta\}$.
\end{prop}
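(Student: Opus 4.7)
The proof is essentially an unpacking of definitions, exploiting the bijective correspondence between orderings of $V(G)$ and consecutive labelings $f: V(G) \to \{1,2,\dots,n\}$, already foreshadowed in the paragraph preceding the proposition. My plan is to show both implications by passing between the ordering $x_1,\dots,x_n$ and the labeling $f$ defined by $f(x_i)=i$.

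For the forward implication, I would assume $G$ is radio graceful with consecutive radio labeling $f$, and let $x_i := f^{-1}(i)$. For any $\Delta \in \{1,\dots,\diam(G)-1\}$ and $i \in \{1,\dots,n-\Delta\}$, the vertices $x_i$ and $x_{i+\Delta}$ are distinct, so the radio condition \eqref{introrc} applied to them gives
\begin{equation*}
\Delta = |f(x_i) - f(x_{i+\Delta})| \geq \diam(G) + 1 - d(x_i, x_{i+\Delta}),
\end{equation*}
which rearranges to \eqref{prelims_rgc}.

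For the converse, I would assume such an ordering exists and define $f(x_i) = i$, so that $f$ is a bijection onto $\{1,2,\dots,n\}$. It suffices to verify the radio condition for any pair $x_i, x_j$ with $i<j$; write $\Delta = j-i \geq 1$. If $\Delta \leq \diam(G)-1$, the hypothesis \eqref{prelims_rgc} directly yields $|f(x_i)-f(x_j)| = \Delta \geq \diam(G) + 1 - d(x_i,x_j)$. The only subtle point, and the one I would be careful to spell out, is the complementary range: if $\Delta \geq \diam(G)$, then since $x_i \neq x_j$ forces $d(x_i,x_j) \geq 1$, we have $|f(x_i)-f(x_j)| = \Delta \geq \diam(G) \geq \diam(G)+1-d(x_i,x_j)$ automatically. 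This is precisely why the condition in \eqref{prelims_rgc} need only be imposed for $\Delta \leq \diam(G)-1$.

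There is no real obstacle here; the content is a clean equivalence, and the only thing to watch for is justifying why pairs with label difference at least $\diam(G)$ impose no additional constraint. Once both directions are assembled, the proposition follows.
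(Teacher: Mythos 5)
Your proof is correct and is exactly the routine unpacking the paper has in mind when it says the proposition ``follows easily'' from the radio condition via the correspondence between consecutive radio labelings and orderings; the paper gives no further detail. Your care with the case $\Delta\geq\diam(G)$ (where $d(x_i,x_j)\geq 1$ makes the radio condition automatic) is the one point worth spelling out, and you did.
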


The inequality \eqref{prelims_rgc} is called the \emph{radio graceful condition}.  As diameter increases, the radio graceful condition must be satisfied for more values of $\Delta$, which underlines the difficulty of finding examples of radio graceful graphs of higher diameter.

\begin{definition}\label{prelims_cart_prod_def}
The \emph{Cartesian product of graphs $G$ and $H$}, denoted $G\square H$, has the vertex set $V(G)\times V(H)$ and has the edges defined by the following property.  Vertices $(u,v),(u',v') \in V(G\square H)$ are adjacent if 
\begin{enumerate}
\item[1.] $u=u'$ and $v$ is adjacent to $v'$ in $H$, or
\item[2.] $v=v'$ and $u$ is adjacent to $u'$ in $G$.
\end{enumerate}
\end{definition}

The distance and diameter are inherited nicely from the factor graphs:
\begin{equation*}
d_{G\square H}((u,v),(u',v'))=d_G(u,u')+d_H(v,v').
\end{equation*}

\section{Radio graceful $K_\ell\square K_m\square K_n$}
In this section we show that the Hamming graphs $K_\ell\square K_m\square K_n$ with $\ell,m,n\geq 2$ and $(\ell,m,n)\not\in\{(2,3,3)\}\cup\{(2,2,n)\colon n\in\mathbb{N}\}$ are radio graceful.
First, we define a list of vertices of $K_\ell\square K_m\square K_n$; then we prove that this list is in one-to-one correspondence with $V(K_\ell\square K_m\square K_n)$, confirming that the list is indeed an ordering of the vertices; next we show that this ordering satisfies the consecutive radio condition, which proves our desired result.
\begin{subsection}{Definition of the ordering $x_{1}, x_{2},...,x_{\ell mn}$}\label{definition}
Consider a Hamming graph $K_{\ell}\square K_{m}\square K_{n}$ with $\ell,m,n\geq 2$ 
 and denote $V(K_{\ell})=\{u_{1},u_{2},...,u_{\ell}\}$, $V(K_{m})=\{v_{1},v_{2},...,v_{m}\}$, and $V(K_{n})=\{w_{1},w_{2},...,w_{n}\}$. 

We will define a list $x_{1}, x_{2},...,x_{\ell mn}$ of the vertices of $K_{\ell}\square K_{m}\square K_{n}$, organized as $\lcm(\ell,m,n) \times 3$ matrices, with the $k^\text{th}$ matrix denoted $A^{(k)}$.  We will define a total of $\frac{\ell mn}{\lcm(\ell ,m,n)}$ matrices.  The rows of the matrices produce the list of vertices in the natural way, with the rows of each matrix contributing the next $\lcm(\ell,m,n)$ vertices of the list, in order.  Precisely, if $A^{(k)}=\left[ a^{(k)}_{i,j} \right]$, and if $h=\lcm(\ell ,m,n)\cdot b +c$ where $c \in \{1,2,...,\lcm(\ell ,m,n)\}$, then $x_{h}$ is $\left(a^{(b+1)}_{c,1}, a^{(b+1)}_{c,2}, a^{(b+1)}_{c,3}\right)$.

The first matrix $A^{(1)}$ is defined as 
\begin{equation}\label{A1}
A^{(1)}=
\left[ {\begin{array}{ccc}
u_1 & v_1 & w_1\\
\rho (u_1) & \sigma (v_1) & \tau (w_1) \\
\rho ^2(u_1) & \sigma ^2 (v_1) & \tau ^2 (w_1)\\
\vdots & \vdots & \vdots \\
\rho ^{\lcm(\ell,m,n)-1}(u_1)& \sigma ^{\lcm(\ell,m,n)-1}(v_1)& \tau ^{\lcm(\ell,m,n)-1}(w_1)\\
\end{array}}\right],
\end{equation}
where $\rho \in S_{V(K_{\ell})}$ is the $\ell$-cycle $(u_{1}\ u_{2}\ \cdots \ u_{\ell})$, $\sigma \in S_{V(K_{m})}$ is the $m$-cycle $(v_{1}\ v_{2}\ \cdots \ v_{m})$, and $\tau \in S_{V(K_{n})}$ is the $n$-cycle $(w_{1}\ w_{2}\ \cdots \ w_{n})$.
We will find it helpful to think of the matrices in terms of their columns, so let 
$A^{(1)} = 
\left[ \begin{array}{ccc}
\mathbf{c}^{(1)} & \mathbf{d}^{(1)} & \mathbf{e}^{(1)}\\ 
\end{array} \right]$.
 For $1<k\leq \frac{\ell m n}{\lcm(\ell ,m,n)}$, let
 \begin{equation}\label{Ak}
A^{(k)}=\left[\begin{array}{ccc}
\mathbf{c}^{(k)}&\mathbf{d}^{(k)}&\mathbf{e}^{(k)}\\
\end{array}\right]=
\begin{cases}
\left[\begin{array}{ccc}
\mathbf{c}^{(1)}&\sigma\left(\mathbf{d}^{(k-1)}\right)&\mathbf{e}^{(k-1)}\\
\end{array}\right]&\text{if }k\equiv 1\ \left({\rm{mod}}\ \lambda\right)\\\\
\left[\begin{array}{ccc}
\mathbf{c}^{(1)}&\mathbf{d}^{(k-1)}&\tau\left(\mathbf{e}^{(k-1)}\right)\\
\end{array}\right]&\text{otherwise}
\end{cases} 
\end{equation}
where $\lambda=\frac{n\cdot\lcm(\ell ,m)}{\lcm(\ell ,m,n)}$.  Notice that the first columns of all $\frac{\ell m n}{\lcm(\ell ,m,n)}$ matrices are identical. See Table \ref{listexample} for an example of the list for $K_{3} \square K_{3} \square K_{6}$.

\begin{table}
\begin{tabular}{|c|c|c|}
\hline
$x_{1}=(u_{1}, v_{1}, w_{1})$&$x_{19}=(u_{1}, v_{2}, w_{3})$&$x_{37}=(u_{1}, v_{3}, w_{5})$\\
$x_{2}=(u_{2}, v_{2}, w_{2})$&$x_{20}=(u_{2}, v_{3}, w_{4})$&$x_{38}=(u_{2}, v_{1}, w_{6})$\\
$x_{3}=(u_{3}, v_{3}, w_{3})$&$x_{21}=(u_{3}, v_{1}, w_{5})$&$x_{39}=(u_{3}, v_{2}, w_{1})$\\
$x_{4}=(u_{1}, v_{1}, w_{4})$&$x_{22}=(u_{1}, v_{2}, w_{6})$&$x_{40}=(u_{1}, v_{3}, w_{2})$\\
$x_{5}=(u_{2}, v_{2}, w_{5})$&$x_{23}=(u_{2}, v_{3}, w_{1})$&$x_{41}=(u_{2}, v_{1}, w_{3})$\\
$x_{6}=(u_{3}, v_{3}, w_{6})$&$x_{24}=(u_{3}, v_{1}, w_{2})$&$x_{42}=(u_{3}, v_{2}, w_{4})$\\
\hline
$x_{7}=(u_{1}, v_{1}, w_{2})$&$x_{25}=(u_{1}, v_{2}, w_{4})$&$x_{43}=(u_{1}, v_{3}, w_{6})$\\
$x_{8}=(u_{2}, v_{2}, w_{3})$&$x_{26}=(u_{2}, v_{3}, w_{5})$&$x_{44}=(u_{2}, v_{1}, w_{1})$\\
$x_{9}=(u_{3}, v_{3}, w_{4})$&$x_{27}=(u_{3}, v_{1}, w_{6})$&$x_{45}=(u_{3}, v_{2}, w_{2})$\\
$x_{10}=(u_{1}, v_{1}, w_{5})$&$x_{28}=(u_{1}, v_{2}, w_{1})$&$x_{46}=(u_{1}, v_{3}, w_{3})$\\
$x_{11}=(u_{2}, v_{2}, w_{6})$&$x_{29}=(u_{2}, v_{3}, w_{2})$&$x_{47}=(u_{2}, v_{1}, w_{4})$\\
$x_{12}=(u_{3}, v_{3}, w_{1})$&$x_{30}=(u_{3}, v_{1}, w_{3})$&$x_{48}=(u_{3}, v_{2}, w_{5})$\\
\hline
$x_{13}=(u_{1}, v_{1}, w_{3})$&$x_{31}=(u_{1}, v_{2}, w_{5})$&$x_{49}=(u_{1}, v_{3}, w_{1})$\\
$x_{14}=(u_{2}, v_{2}, w_{4})$&$x_{32}=(u_{2}, v_{3}, w_{6})$&$x_{50}=(u_{2}, v_{1}, w_{2})$\\
$x_{15}=(u_{3}, v_{3}, w_{5})$&$x_{33}=(u_{3}, v_{1}, w_{1})$&$x_{51}=(u_{3}, v_{2}, w_{3})$\\
$x_{16}=(u_{1}, v_{1}, w_{6})$&$x_{34}=(u_{1}, v_{2}, w_{2})$&$x_{52}=(u_{1}, v_{3}, w_{4})$\\
$x_{17}=(u_{2}, v_{2}, w_{1})$&$x_{35}=(u_{2}, v_{3}, w_{3})$&$x_{53}=(u_{2}, v_{1}, w_{5})$\\
$x_{18}=(u_{3}, v_{3}, w_{2})$&$x_{36}=(u_{3}, v_{1}, w_{4})$&$x_{54}=(u_{3}, v_{2}, w_{6})$\\
\hline
\end{tabular}
\caption{The list of $x_{1}, x_{2},...,x_{54}$ for $K_{3} \square K_{3} \square K_{6}$ }
\label{listexample}
\end{table}
\end{subsection} 
\begin{subsection}{The list is an ordering of $V(K_{\ell} \square K_{m} \square K_{n})$}\label{ordering_section}
In this section we will prove that our list $x_{1}, x_{2},...,x_{\ell mn}$ is an ordering for $V(K_{\ell}\square K_{m}\square K_{n})$ by proving that it is in one-to-one correspondence with $V(K_\ell\square K_m\square K_n)$.  Since $|V(K_\ell\square K_m\square K_n)|=\ell mn$, we need only to prove that $x_i\neq x_j$ for all distinct $i,j\in\{1,2,\dots,\ell mn\}$.

Each matrix $A^{(k)}$ inherits a cyclical structure from $A^{(1)}$.  That is,
\begin{equation}\label{cyclical}
A^{(k)}=
\left[ {\begin{array}{ccc}
u_1 & v_i & w_j\\
\rho (u_1) & \sigma (v_i) & \tau (w_j) \\
\rho ^2(u_1) & \sigma ^2 (v_i) & \tau ^2 (w_j)\\
\vdots & \vdots & \vdots \\
\rho ^{\lcm(\ell,m,n)-1}(u_1)& \sigma ^{\lcm(\ell,m,n)-1}(v_i)& \tau ^{\lcm(\ell,m,n)-1}(w_j)\\
\end{array}}\right]
\end{equation}
for some $i\in\{1,2,\dots,m\}$, $j\in\{1,2,\dots,n\}$.  This structure gives us our first two steps in showing our list has no repetition.

\begin{prop}\label{norepinAk}
For any $k\in\left\{1,2,\dots,\frac{\ell mn}{\lcm(\ell,m,n)}\right\}$, the rows of $A^{(k)}$ are distinct.
\end{prop}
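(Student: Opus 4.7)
The plan is to leverage the cyclic structure of $A^{(k)}$ asserted in equation \eqref{cyclical}. First I would justify this structure by induction on $k$: the base case is the definition of $A^{(1)}$ in \eqref{A1}, and for the inductive step I would note that if $\mathbf{d}^{(k-1)}$ has the form $(v_i,\sigma(v_i),\sigma^2(v_i),\dots,\sigma^{\lcm(\ell,m,n)-1}(v_i))^T$ for some $i$, then $\sigma(\mathbf{d}^{(k-1)}) = (\sigma(v_i),\sigma^2(v_i),\dots,\sigma^{\lcm(\ell,m,n)}(v_i))^T$, and since $m\mid\lcm(\ell,m,n)$ the last entry cycles back to $v_{i'}$ with $i' = i+1\pmod m$, producing a column of the same cyclic form but starting at $v_{i'}$. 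The same reasoning applies to the $\tau$-application in the third column, and the first column is fixed by the recursion. Thus the form in \eqref{cyclical} is preserved for every $k$.

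Next, fix $k$ and set $L=\lcm(\ell,m,n)$. Denote the rows of $A^{(k)}$ by $r_0,r_1,\dots,r_{L-1}$, where $r_s = (\rho^s(u_1),\sigma^s(v_i),\tau^s(w_j))$ for the pair $(i,j)$ supplied by \eqref{cyclical}. Suppose $r_s=r_t$ for some $0\leq s,t \leq L-1$. Comparing coordinates, $\rho^s(u_1)=\rho^t(u_1)$, $\sigma^s(v_i)=\sigma^t(v_i)$, and $\tau^s(w_j)=\tau^t(w_j)$. Since $\rho$, $\sigma$, $\tau$ are cycles of lengths $\ell$, $m$, $n$ respectively, these equalities force $s \equiv t \pmod{\ell}$, $s \equiv t \pmod{m}$, and $s \equiv t \pmod{n}$, hence $s\equiv t\pmod{L}$. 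As $0\leq s,t\leq L-1$, this yields $s=t$, so the rows of $A^{(k)}$ are distinct.

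There really is no substantial obstacle here: once the cyclic form \eqref{cyclical} is established, the rest is a one-line application of the definitions of $\rho$, $\sigma$, $\tau$ together with the definition of $\lcm$. The only mildly delicate point is the inductive verification of \eqref{cyclical}, which requires observing that a cyclic permutation applied to a full orbit column shifts the starting index but preserves the column's form, and that the recursion in \eqref{Ak} touches exactly one of the two non-fixed columns at each step.
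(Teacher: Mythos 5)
Your argument is correct and follows essentially the same route as the paper: both rest on the cyclic form \eqref{cyclical} of $A^{(k)}$ and then deduce from equality of two rows the congruences modulo $\ell$, $m$, and $n$, hence modulo $\lcm(\ell,m,n)$, forcing the two row indices to coincide. Your inductive verification of \eqref{cyclical} is a detail the paper asserts without proof, so you have if anything been slightly more careful.
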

\begin{proof}
Consider the representation of $A^{(k)}$ given in \eqref{cyclical}.  In search of contradiction, suppose two rows in this matrix are identical.  Then there exist distinct $\alpha,\beta\in\{0,1,\dots,\lcm(\ell,m,n)-1\}$ such that $\rho^\alpha(u_1)=\rho^\beta(u_1)$, $\sigma^\alpha(v_i)=\sigma^\beta(v_i)$, and $\tau^\alpha(w_j)=\tau^\beta(w_j)$.  These respectively imply that $\alpha\equiv_\ell\beta$, $\alpha\equiv_m\beta$, and $\alpha\equiv_n\beta$, which in turn implies $\alpha\equiv_{\lcm(\ell,m,n)}\beta$.  However, as $\alpha$ and $\beta$ are distinct elements of $\{0,1,\dots, \lcm(\ell,m,n)-1\}$, this is not possible.  Therefore, no pair of identical rows exist.
\end{proof}

Because of the structure shown in \eqref{cyclical}, any row of $A^{(k)}$ determines the entire matrix.  And, because there are $\lcm(\ell,m,n)$ rows, if two matrices $A^{(k_1)}$ and $A^{(k_2)}$ share a common row, then they must share all rows (possibly cyclically permuted).  This gives us the following proposition.

\begin{prop}\label{samerows}
Let $k_1,k_2\in\left\{1,2,\dots,\frac{\ell mn}{\lcm(\ell,m,n)}\right\}$.  If there exists a row of $A^{(k_1)}$ that is also a row of $A^{(k_2)}$, then each row of $A^{(k_1)}$ is also a row of $A^{(k_2)}$.
\end{prop}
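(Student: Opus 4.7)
The plan is to exploit the cyclic structure displayed in \eqref{cyclical}: every row of $A^{(k)}$ is obtained from the preceding row by the simultaneous action of $(\rho,\sigma,\tau)$ componentwise, so the rows of $A^{(k)}$ form one orbit of the cyclic group generated by $(\rho,\sigma,\tau)$ acting on $V(K_\ell)\times V(K_m)\times V(K_n)$. Let $L=\lcm(\ell,m,n)$. Because $\ell,m,n$ all divide $L$, we have $\rho^L=\sigma^L=\tau^L=\mathrm{id}$, so the row index may be read modulo $L$.

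First I would formalize the description that row $\alpha\in\{0,1,\dots,L-1\}$ of $A^{(k)}$ is the triple $(\rho^\alpha(u_1),\sigma^\alpha(v_{i_k}),\tau^\alpha(w_{j_k}))$, where $v_{i_k},w_{j_k}$ are the entries in the top row of $A^{(k)}$ as dictated by the recursion \eqref{Ak}. Next, assume there exists a row common to $A^{(k_1)}$ and $A^{(k_2)}$, say row $\alpha$ of $A^{(k_1)}$ coincides with row $\beta$ of $A^{(k_2)}$. Applying $(\rho^s,\sigma^s,\tau^s)$ componentwise to both sides of this equality yields
\begin{equation*}
(\rho^{\alpha+s}(u_1),\sigma^{\alpha+s}(v_{i_{k_1}}),\tau^{\alpha+s}(w_{j_{k_1}}))=(\rho^{\beta+s}(u_1),\sigma^{\beta+s}(v_{i_{k_2}}),\tau^{\beta+s}(w_{j_{k_2}})),
\end{equation*}
which is precisely the statement that row $\alpha+s\pmod{L}$ of $A^{(k_1)}$ equals row $\beta+s\pmod{L}$ of $A^{(k_2)}$.

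Finally, as $s$ ranges over $\{0,1,\dots,L-1\}$, the index $\alpha+s\pmod{L}$ sweeps out every row index of $A^{(k_1)}$, and each such row is shown to appear in $A^{(k_2)}$. This gives the desired conclusion. The only subtle point is justifying that the exponents may be reduced modulo $L$ in each coordinate; this follows from $\ell\mid L$, $m\mid L$, $n\mid L$, and hence is not a genuine obstacle. No case analysis on the recursive definition of $A^{(k)}$ is needed, since the argument depends only on the shared cyclic structure \eqref{cyclical} that every $A^{(k)}$ inherits from $A^{(1)}$.
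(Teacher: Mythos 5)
Your proof is correct and follows essentially the same approach as the paper, which justifies Proposition \ref{samerows} with the informal observation that the cyclic structure \eqref{cyclical} lets any single row determine the whole matrix up to a cyclic shift of the $\lcm(\ell,m,n)$ rows. Your write-up simply makes that shift argument explicit via the action of $(\rho^s,\sigma^s,\tau^s)$, including the (correct) remark that exponents reduce modulo $\lcm(\ell,m,n)$ coordinatewise.
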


We can think of the first row as \emph{producing} the rest of the matrix; in view of this, we make the following definition.

\begin{definition}
 A vertex of $K_\ell\square K_m\square K_n$ is called a \emph{seed} if it corresponds to the first row of $A^{(k)}$ for some $k\in\left\{1,2,\dots,\frac{\ell mn}{\lcm(\ell,m,n)}\right\}$.
\end{definition}

In pursuit of proving that the list defined in \ref{definition} has no repeated vertices, we will make several observations about seeds.  From the definition of the list, given in \eqref{A1} and \eqref{Ak}, the first row of $A^{(1)}$ is $(u_1,v_1,w_1)$, and the first entry of the first row of $A^{(k)}$ is $u_1$ for all $k\in\left\{1,2,\dots,\frac{\ell mn}{\lcm(\ell,m,n)}\right\}$.  According to definition \eqref{Ak}, a matrix $A^{(k)}$ differs from its predecessor $A^{(k-1)}$ by an application of $\tau$ in the third column of $A^{(k-1)}$, unless $k\equiv 1\ ({\rm{mod}}\ \lambda)$.  In this case, $\sigma$
 instead is applied to the second column of $A^{(k-1)}$ to produce $A^{(k)}$.  This gives the pattern of the first rows, or seeds, given in Table \ref{firstrows}.  A new row of the table starts each time $k\equiv1\ ({\rm{mod}}\ \lambda)$; hence, the table has $\lambda$ columns.  Recall that the total number of matrices is $\frac{\ell mn}{\lcm(\ell,m,n)}$ and $\lambda=\frac{n\cdot\lcm(\ell,m)}{\lcm(\ell,m,n)}$; then $\frac{\ell mn}{\lcm(\ell,m,n)}\cdot\frac{1}{\lambda}=\gcd(\ell,m)$.  Table \ref{firstrows}, therefore, represents a total of $\gcd(\ell,m)$ rows.  Rows in the table are indexed by $i$, and we use $\gamma$ in the table to mean $\gcd(\ell,m)$.

{
\begin{table}[h]
\resizebox{\textwidth}{!}{
\begin{tabular}{|c|c|c|c|}
\hline
$k=1$&$k=2$
&&$k=\lambda$\\
$\left(u_1,v_1,w_1\right)$&$\left(u_1,v_1,\tau(w_1)\right)$
&$\cdots$&$\left(u_1,v_1,\tau^{\lambda-1}(w_1)\right)$\\
\hline
$k=\lambda+1$&$k=\lambda+2$
&&$k=2\lambda$\\
$\left(u_1,\sigma(v_1),\tau^{\lambda-1}(w_1)\right)$&$\left(u_1,\sigma(v_1),\tau^{(\lambda-1)+1}(w_1)\right)$
&$\cdots$&$\left(u_1,\sigma(v_1),\tau^{2(\lambda-1)}(w_1)\right)$\\
\hline
$k=2\lambda+1$&$k=2\lambda+2$
&&$k=3\lambda$\\
$\left(u_1,\sigma^2(v_1),\tau^{2(\lambda-1)}(w_1)\right)$&$\left(u_1,\sigma^2(v_1),\tau^{2(\lambda-1)+1}(w_1)\right)$
&$\cdots$&$\left(u_1,\sigma^2(v_1),\tau^{3(\lambda-1)}(w_1)\right)$\\
\hline
$\vdots$&$\vdots$
&$\cdots$&$\vdots$\\
\hline
$k=(i-1)\lambda+1$&$k=(i-1)\lambda+2$
&&$k=i\lambda$\\
$\left(u_1,\sigma^{i-1}(v_1),\tau^{(i-1)(\lambda-1)}(w_1)\right)$&$\left(u_1,\sigma^{i-1}(v_1),\tau^{(i-1)(\lambda-1)+1}(w_1)\right)$
&$\cdots$&$\left(u_1,\sigma^{i-1}(v_1),\tau^{i(\lambda-1)}(w_1)\right)$\\
\hline
$\vdots$&$\vdots$
&$\cdots$&$\vdots$\\
\hline
$k=(\gamma-1)\lambda+1$&$k=(\gamma-1)\lambda+2$
&&$k=\gamma\lambda$\\
$\left(u_1,\sigma^{\gamma-1}(v_1),\tau^{(\gamma-1)(\lambda-1)}(w_1)\right)$&$\left(u_1,\sigma^{\gamma-1}(v_1),\tau^{(\gamma-1)(\lambda-1)+1}(w_1)\right)$
&$\cdots$&$\left(u_1,\sigma^{\gamma-1}(v_1),\tau^{\gamma(\lambda-1)}(w_1)\right)$\\
\hline
\end{tabular}}
\caption{First rows of $\left\{A^{(k)}\right\}$, corresponding to the seeds of $K_\ell\square K_m\square K_n$, with $\gamma=\gcd(\ell,m)$}
\label{firstrows}
\end{table}
}

The seed in the $i^\text{th}$ row and $j^\text{th}$ column of Table \ref{firstrows} is given by
\begin{equation*}\label{sij}
\left(u_1,\sigma^{i-1}(v_1),\tau^{(i-1)(\lambda-1)+j-1}(w_1)\right)\end{equation*}
where $i\in\{1,2,\dots,\gcd(\ell,m)\}$ and $j\in\{1,2,\dots,\lambda\}$.  Because $i-1<\gcd(\ell,m)\leq m$, we can simplify the second {\color{black}component}, so the seed in the $i^\text{th}$ row and $j^\text{th}$ column of Table \ref{firstrows} is equal to
\begin{equation*}\label{sij}
\left(u_1,v_i,\tau^{(i-1)(\lambda-1)+j-1}(w_1)\right).
\end{equation*}
Each $k\in\left\{1,2,\dots,\frac{\ell mn}{\lcm(\ell,m,n)}\right\}$ is associated with a seed, as shown in Table \ref{firstrows}.  If we write $k=(b-1)\lambda+c$, with $c\in\{1,2,\dots,\lambda\}$, then the first row of $A^{(k)}$ is the entry of Table \ref{firstrows} in row $b$, column $c$, and we call this seed $s_k$:
\begin{equation}\label{sk}
s_k=\left(u_1,v_b,\tau^{(b-1)(\lambda-1)+c-1}(w_1)\right).
\end{equation}

Recall that our goal is to show that there is no repetition in our list of vertices.  These next facts we prove about seeds will allow us to do that.

\begin{prop}\label{seedsdistinct}
If $k_1,k_2\in\left\{1,2,\dots,\frac{\ell mn}{\lcm(\ell,m,n)}\right\}$, and $s_{k_1}=s_{k_2}$, then $k_1=k_2$.
\end{prop}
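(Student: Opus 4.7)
The plan is to unpack the formula \eqref{sk} for $s_k$ componentwise and read off the forcing of $k_1=k_2$. Writing $k_j=(b_j-1)\lambda+c_j$ with $b_j\in\{1,\dots,\gcd(\ell,m)\}$ and $c_j\in\{1,\dots,\lambda\}$ for $j=1,2$, I would first compare second components. Since $s_{k_j}$ has second component $v_{b_j}$ and since $b_1,b_2\le \gcd(\ell,m)\le m$ are legitimate indices for the vertices of $K_m$, the equality $v_{b_1}=v_{b_2}$ forces $b_1=b_2$. Call this common value $b$.

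Next I would compare third components. With $b_1=b_2=b$, the exponent $(b-1)(\lambda-1)$ is common to both, so cancelling $\tau^{(b-1)(\lambda-1)}$ reduces the equation to $\tau^{c_1-1}(w_1)=\tau^{c_2-1}(w_1)$. Since $\tau$ is an $n$-cycle, this gives $c_1\equiv c_2\pmod n$.

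The final step is to promote this congruence to an equality. This requires the observation that $\lambda\le n$. I would argue
\[
\lambda=\frac{n\cdot\lcm(\ell,m)}{\lcm(\ell,m,n)}=\frac{n\cdot\lcm(\ell,m)}{\lcm(\ell,m)\cdot n/\gcd(\lcm(\ell,m),n)}=\gcd(\lcm(\ell,m),n),
\]
so $\lambda\le n$. Since $c_1,c_2\in\{1,\dots,\lambda\}\subseteq\{1,\dots,n\}$, the difference $|c_1-c_2|$ is strictly less than $n$, so $c_1\equiv c_2\pmod n$ implies $c_1=c_2$. Combining $b_1=b_2$ with $c_1=c_2$ gives $k_1=k_2$.

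I do not expect a real obstacle here; the proof is essentially bookkeeping once the formula \eqref{sk} is in hand. The one spot that warrants care is the bound $\lambda\le n$, which is needed to lift the modular equality on $c_j$ to a genuine equality and which also implicitly justified the simplification $\sigma^{i-1}(v_1)=v_i$ in the $b$-coordinate comparison.
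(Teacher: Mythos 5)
Your proof is correct and follows essentially the same route as the paper: decompose $k_j=(b_j-1)\lambda+c_j$, read off $b_1=b_2$ from the second component, get $c_1\equiv_n c_2$ from the third, and use $\lambda\leq n$ to conclude $c_1=c_2$. Your explicit verification that $\lambda=\gcd(\lcm(\ell,m),n)\leq n$ is a nice touch the paper only asserts.
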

\begin{proof}
Let $k_1,k_2\in\left\{1,2,\dots,\frac{\ell mn}{\lcm(\ell,m,n)}\right\}$.  We can write $k_1=(b_1-1)\lambda+c_1$ and $k_2=(b_2-1)\lambda+c_2$ with $c_1,c_2\in\{1,2,\dots,\lambda\}$ and $b_1,b_2\in\{1,2,\dots,\gcd(\ell,m)\}$.  Suppose $s_{k_1}=s_{k_2}$.  It follows immediately from \eqref{sk} that $b_1=b_2$.  Also, it follows from $\tau^{(b_1-1)(\lambda-1)+c_1-1}(w_1)=\tau^{(b_2-1)(\lambda-1)+c_2-1}(w_1)$ that $c_1\equiv_n c_2$.
We know $c_1,c_2\in\{1,2,\dots,\lambda\}$.  Since $\lambda=\frac{n\cdot\lcm(\ell,m)}{\lcm(\ell,m,n)}\leq n$, we can get $c_1=c_2$.  We have shown $k_1=k_2$.
\end{proof}

\begin{prop}\label{noseedsametype}
Let $k_*\in\left\{1,2,\dots,\frac{\ell mn}{\lcm(\ell,m,n)}\right\}$, with $k_*=(b-1)\lambda+c$, $c\in\{1,2,\dots,\lambda\}$.  If $(u_1,v_b,w_z)$ is a row of $A^{(k_*)}$ other than the first row, then $(u_1,v_b,w_z)\neq s_k$ for any $k$.
\end{prop}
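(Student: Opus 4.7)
The plan is to use the cyclical structure of $A^{(k_*)}$ given by \eqref{cyclical} to parameterize its rows and then track the arithmetic constraints that being $(u_1,v_b,w_z)$ imposes on the row index. Writing the third coordinate of the first row of $A^{(k_*)}$ as $\tau^{(b-1)(\lambda-1)+c-1}(w_1)$ via \eqref{sk}, the row of $A^{(k_*)}$ in position $\alpha+1$ (with $\alpha\in\{0,1,\dots,\lcm(\ell,m,n)-1\}$) is
\[
\left(\rho^\alpha(u_1),\ \sigma^\alpha(v_b),\ \tau^{\alpha+(b-1)(\lambda-1)+c-1}(w_1)\right).
\]
The assumption that this row has first coordinate $u_1$ forces $\rho^\alpha(u_1)=u_1$, hence $\ell\mid\alpha$; the assumption that its second coordinate equals $v_b$ forces $\sigma^\alpha(v_b)=v_b$, hence $m\mid\alpha$. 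Therefore $\lcm(\ell,m)\mid\alpha$.

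Now I would argue by contradiction. Suppose $(u_1,v_b,w_z)=s_k$ for some $k=(b'-1)\lambda+c'$, where $b'\in\{1,\dots,\gcd(\ell,m)\}$ and $c'\in\{1,\dots,\lambda\}$. Matching second coordinates gives $v_b=v_{b'}$, and since $b,b'\in\{1,\dots,\gcd(\ell,m)\}\subseteq\{1,\dots,m\}$, this gives $b=b'$. Matching third coordinates then yields $\tau^{\alpha+(b-1)(\lambda-1)+c-1}(w_1)=\tau^{(b-1)(\lambda-1)+c'-1}(w_1)$, which simplifies to the congruence $\alpha\equiv c'-c\pmod{n}$.

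The crux of the argument is to turn this into $\alpha\equiv 0\pmod n$. The key number-theoretic identity is $\gcd(\lcm(\ell,m),n)=\lcm(\ell,m)\cdot n/\lcm(\ell,m,n)=\lambda$; since $\lcm(\ell,m)\mid\alpha$, this forces $\alpha\pmod n$ to be a multiple of $\lambda$. On the other hand, $c,c'\in\{1,\dots,\lambda\}$ imply $c'-c\in\{-(\lambda-1),\dots,\lambda-1\}$, and since $\lambda\leq n$ the only residue class mod $n$ in that range that is also a multiple of $\lambda$ is $0$. Hence $c=c'$ and $\alpha\equiv0\pmod n$. Combining $\ell\mid\alpha$, $m\mid\alpha$, and $n\mid\alpha$ gives $\lcm(\ell,m,n)\mid\alpha$, and since $0\leq\alpha<\lcm(\ell,m,n)$ we conclude $\alpha=0$, contradicting the hypothesis that the row is not the first row of $A^{(k_*)}$.

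The main obstacle is the clean verification of $\gcd(\lcm(\ell,m),n)=\lambda$ and the careful bookkeeping needed to confine $c'-c$ to a range short enough (namely $<\lambda\leq n$) that zero is the unique multiple of $\lambda$ appearing as its residue mod $n$. Once that piece is in place, the rest is just assembling the divisibility conditions inherited from the cyclic actions of $\rho$, $\sigma$, and $\tau$.
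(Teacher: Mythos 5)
Your proof is correct and follows essentially the same route as the paper: parameterize the rows of $A^{(k_*)}$ by the exponent of the cyclic shifts, deduce $\lcm(\ell,m)\mid\alpha$ from the first two coordinates, use $\lambda=\gcd(\lcm(\ell,m),n)$ together with the bound $|c'-c|<\lambda$ to force $c=c'$ and $n\mid\alpha$, and conclude $\lcm(\ell,m,n)\mid\alpha$, a contradiction. The paper phrases the middle step as the equation $ne-\gamma=c-(d+1)$ with $\lambda$ dividing the left-hand side, but this is the same congruence argument you give.
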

\begin{proof}
Let $k_*\in\left\{1,2,\dots,\frac{\ell mn}{\lcm(\ell,m,n)}\right\}$, with $k_*=(b-1)\lambda+c$, $c\in\{1,2,\dots,\lambda\}$; then the first row of $A^{(k_*)}$ is $s_{k_*}=(u_1,v_b,\tau^{(b-1)(\lambda-1)+c-1}(w_1))$ by \eqref{sk}.  If $(u_1,v_b,w_z)$ is any row of $A^{(k_*)}$ other than the first row, then 
$$(u_1,v_b,w_z)=(\rho^\gamma(u_1),\sigma^\gamma(v_b),\tau^{(b-1)(\lambda-1)+c-1+\gamma}(w_1))$$
where $\gamma\in\{1,2,\dots,\lcm(\ell,m,n)-1\}$ and $\gamma$ is an integer multiple of $\lcm(\ell,m)$.

In search of contradiction, suppose $(u_1,v_b,w_z)$ is a seed.  Since its second {\color{black}component} is $v_b$, we can see from Table \ref{firstrows} that $w_z=\tau^{(b-1)(\lambda-1)+c-1+\gamma}(w_1)\in\left\{\tau^{(b-1)(\lambda-1)+d}(w_1)\mid d\in\{0,1,\dots,\lambda-1\}\right\}$.  Then, for such $d$, $c+\gamma\equiv_n d+1$, or in other words, there exists an integer $e$ such that 
\begin{equation}\label{propstep}
ne-\gamma=c-(d+1).
\end{equation}
  Recalling that $\gamma$ is an integer multiple of $\lcm(\ell,m)$ and $\lambda=\frac{n\cdot\lcm(\ell,m)}{\lcm(\ell,m,n)}$, it is the case that $\lambda$ divides the lefthand side of \eqref{propstep}, and therefore $\lambda$ divides $c-(d+1)$.  Observing the constraints of constants $c$ and $d$, we see that $c-(d+1)\in\{-\lambda+1,-\lambda+2,\dots,\lambda-1\}$.  It follows that $c-(d+1)=0$.

Then equation \eqref{propstep} shows that $\gamma$ is not only an integer multiple of $\lcm(\ell,m)$, but an integer multiple of $\lcm(\ell,m,n)$.  However, as $\gamma\in\{1,2,\dots,\lcm(\ell,m,n)-1\}$, we have reached a contradiction.  Therefore, $(u_1,v_b,w_z)\neq s_k$ for any $k$.
\end{proof}

\begin{lemma}\label{lemma}
If $\left(u_1,v_i,w_j\right)$ is $s_k$, and $(u_1,v_y,w_z)$ is any row in $A^{(k)}$, then $y\equiv_{\gcd(\ell,m)} i$.
\end{lemma}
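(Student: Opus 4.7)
The plan is to exploit the cyclical structure of $A^{(k)}$ exhibited in \eqref{cyclical}. Writing $s_k=(u_1,v_i,w_j)$ as the first row, every row of $A^{(k)}$ takes the form
\[
\bigl(\rho^{\alpha}(u_1),\,\sigma^{\alpha}(v_i),\,\tau^{\alpha}(w_j)\bigr)
\]
for some $\alpha\in\{0,1,\dots,\lcm(\ell,m,n)-1\}$. So the strategy is simply: apply this parameterization to the row $(u_1,v_y,w_z)$, read off constraints on $\alpha$ from the first coordinate, and then translate them through the second coordinate.

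First I would use the hypothesis $(u_1,v_y,w_z)$ is a row of $A^{(k)}$ to produce such an $\alpha$, so that $\rho^{\alpha}(u_1)=u_1$, $\sigma^{\alpha}(v_i)=v_y$, and $\tau^{\alpha}(w_j)=w_z$. Since $\rho$ is the $\ell$-cycle $(u_1\ u_2\ \cdots\ u_\ell)$, the equation $\rho^{\alpha}(u_1)=u_1$ forces $\ell\mid\alpha$. In particular, because $\gcd(\ell,m)\mid\ell$, we also have $\gcd(\ell,m)\mid\alpha$.

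Next, since $\sigma$ is the $m$-cycle $(v_1\ v_2\ \cdots\ v_m)$, the equation $\sigma^{\alpha}(v_i)=v_y$ gives $y\equiv i+\alpha\pmod{m}$. Reducing this congruence modulo $\gcd(\ell,m)$ (which divides $m$) and using $\gcd(\ell,m)\mid\alpha$ yields $y\equiv i\pmod{\gcd(\ell,m)}$, which is the claim.

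There isn't a significant obstacle here: the entire argument is a direct bookkeeping consequence of \eqref{cyclical} together with the orders of the cycles $\rho$ and $\sigma$. The only thing to be careful about is ensuring that the chosen $\alpha$ genuinely ranges over a set where $\rho^\alpha(u_1)=u_1$ is equivalent to $\ell\mid\alpha$; this is immediate from the cyclic representation in \eqref{cyclical}, so the lemma follows cleanly.
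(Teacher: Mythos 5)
Your argument is correct and is essentially identical to the paper's proof: both parameterize the rows of $A^{(k)}$ as $\bigl(\rho^{\alpha}(u_1),\sigma^{\alpha}(v_i),\tau^{\alpha}(w_j)\bigr)$, deduce $\ell\mid\alpha$ from the first coordinate, and reduce the resulting congruence $y\equiv i+\alpha\pmod{m}$ modulo $\gcd(\ell,m)$. No gaps.
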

\begin{proof}
Let $\left(u_1,v_i,w_j\right)$ be the first row of $A^{(k)}$.  Then each row of $A^{(k)}$ takes the form $\left(\rho^\gamma(u_1),\sigma^\gamma(v_i),\tau^\gamma(w_z)\right)$.  If $(u_1,v_y,w_z)=\left(\rho^\gamma(u_1),\sigma^\gamma(v_i),\tau^\gamma(w_z)\right)$, then $\gamma$ is a multiple of $\ell$, say $\gamma=b\ell$.  And $y=i+b\ell\ ({\rm{mod}}\ m)$.  Then, for some integer $c$, $y=i+b\ell+cm$, and therefore $y\equiv i\ ({\rm{mod}}\ \gcd(\ell,m))$.
\end{proof}

\begin{prop}\label{norepseed}
Let $k_*\in\left\{1,2,\dots,\frac{\ell mn}{\lcm(\ell,m,n)}\right\}$.  If $(u_x,v_y,w_z)$ is any row of $A^{(k_*)}$ other than the first row, then $(u_x,v_y,w_z)\neq s_k$ for any $k$.
\end{prop}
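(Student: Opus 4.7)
The plan is to reduce Proposition \ref{norepseed} to the already-established Proposition \ref{noseedsametype} by pinning down the second coordinate $v_y$ via Lemma \ref{lemma}. Assume for contradiction that $(u_x,v_y,w_z)$ is a non-first row of $A^{(k_*)}$ which is also a seed $s_k$. Write $k_*=(b-1)\lambda+c$ with $c\in\{1,2,\dots,\lambda\}$ and $b\in\{1,2,\dots,\gcd(\ell,m)\}$, so by \eqref{sk} the first row of $A^{(k_*)}$ is $s_{k_*}=(u_1,v_b,\tau^{(b-1)(\lambda-1)+c-1}(w_1))$, and by \eqref{cyclical} every row of $A^{(k_*)}$ has the form $(\rho^\gamma(u_1),\sigma^\gamma(v_b),\tau^\gamma(\tau^{(b-1)(\lambda-1)+c-1}(w_1)))$ for some $\gamma\in\{0,1,\dots,\lcm(\ell,m,n)-1\}$, with $\gamma\neq 0$ for a non-first row.

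First I would use that every seed $s_k$ has $u_1$ as its first component (see Table \ref{firstrows} and \eqref{sk}). Thus $u_x=\rho^\gamma(u_1)=u_1$ forces $\ell\mid\gamma$. Next, applying Lemma \ref{lemma} to $A^{(k_*)}$, the second component $v_y$ of our row satisfies $y\equiv b\pmod{\gcd(\ell,m)}$. Now the key observation is that the second component of every seed, as read from \eqref{sk}, lies in $\{v_1,v_2,\dots,v_{\gcd(\ell,m)}\}$; since $b$ is itself in $\{1,2,\dots,\gcd(\ell,m)\}$, the congruence $y\equiv b\pmod{\gcd(\ell,m)}$ forces $y=b$.

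Therefore $(u_x,v_y,w_z)=(u_1,v_b,w_z)$ is a non-first row of $A^{(k_*)}$ whose second coordinate matches that of the seed $s_{k_*}$. This is precisely the hypothesis of Proposition \ref{noseedsametype}, which concludes that $(u_1,v_b,w_z)$ cannot be any seed $s_k$. This contradicts the assumption that our row equals some $s_k$, completing the proof.

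The main obstacle, and the step to justify most carefully, is the reduction $y=b$: one must verify both that the seeds' second components are indexed precisely by $\{1,\dots,\gcd(\ell,m)\}$ (so that a seed with second component $v_y$ forces $y$ to be in that range, allowing the modular congruence to pin down an equality) and that Lemma \ref{lemma} indeed applies with $i=b$. Once those are in place, the rest of the argument is a short appeal to Proposition \ref{noseedsametype}.
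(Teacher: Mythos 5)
Your proof is correct and uses exactly the same ingredients as the paper's: the fact that all seeds have second component indexed in $\{1,\dots,\gcd(\ell,m)\}$ (Table \ref{firstrows}), Lemma \ref{lemma} to get the congruence on the second index, and Proposition \ref{noseedsametype} to rule out the resulting row. The paper merely phrases the final step as three mutually incompatible statements ($y\neq y'$, $y\equiv_{\gcd(\ell,m)}y'$, and both in range) rather than first deducing $y=b$ and then invoking Proposition \ref{noseedsametype}, so the two arguments are the same up to reordering.
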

\begin{proof}
In search of contradiction, let $(u_x,v_y,w_z)$ be a row of $A^{(k_*)}$ other than the first row, and suppose $(u_x,v_y,w_z)=s_k$ for some $k$.  Then necessarily $(u_x,v_y,w_z)=(u_1,v_y,w_z)$.  Take $s_{k_*}=(u_{1},v_{y'},w_{z'})$.  From Table \ref{firstrows}, $y,y'\in\{1,2,\dots,\gcd(\ell,m)\}$.  By Proposition \ref{noseedsametype}, $y\neq y'$.  And Lemma \ref{lemma} states that $y\equiv_{\gcd(\ell,m)}y'$.  But these three statements cannot be simultaneously true.  Therefore, $(u_x,v_y,w_z)\neq s_k$ for any $k$.
\end{proof}

\begin{prop}\label{norepinlist}
The list of vertices $x_1,x_2,\dots,x_{\ell mn}$ defined in Section \ref{definition} is pairwise distinct.
\end{prop}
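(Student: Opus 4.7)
The plan is to assemble Proposition \ref{norepinlist} directly from the four propositions and one lemma already established about the structure of the matrices $A^{(k)}$ and their seeds. Suppose, for contradiction, that $x_i = x_j$ for some $i \neq j$ in $\{1,2,\dots,\ell mn\}$. Then the equal vertices sit either in the same matrix $A^{(k)}$ or in two different matrices $A^{(k_1)}$ and $A^{(k_2)}$ with $k_1 \neq k_2$, so I would split into these two cases.

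The first case is immediate from Proposition \ref{norepinAk}, which asserts that the $\lcm(\ell,m,n)$ rows of a single $A^{(k)}$ are pairwise distinct, so no repetition can arise within one matrix.

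For the cross-matrix case, I would proceed as follows. Assume $A^{(k_1)}$ and $A^{(k_2)}$ share a common row. By Proposition \ref{samerows}, every row of $A^{(k_1)}$ must then appear as a row of $A^{(k_2)}$. In particular, the seed $s_{k_1}$, being the first row of $A^{(k_1)}$, must appear somewhere in $A^{(k_2)}$. Now Proposition \ref{norepseed} forbids any seed from appearing as a non-first row of any matrix, so $s_{k_1}$ must be the first row of $A^{(k_2)}$; that is, $s_{k_1} = s_{k_2}$. But then Proposition \ref{seedsdistinct} forces $k_1 = k_2$, contradicting the assumption that the two matrices are different.

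No step here is an obstacle: all of the structural work has been done in the earlier propositions, and the only remaining task is the logical bookkeeping above. If anything deserves attention, it is the initial reduction of an arbitrary repetition $x_i = x_j$ to the two cases (same matrix vs.\ different matrices), which follows simply from the definition in Section \ref{definition} that partitions the list into consecutive blocks of $\lcm(\ell,m,n)$ entries, each block corresponding to the rows of one $A^{(k)}$.
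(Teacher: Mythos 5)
Your proof is correct and follows essentially the same route as the paper: reduce to the cross-matrix case via Proposition \ref{norepinAk}, apply Proposition \ref{samerows} to propagate the seed $s_{k_1}$ into $A^{(k_2)}$, and then use Propositions \ref{norepseed} and \ref{seedsdistinct} to rule out both possible positions for it. The only difference is the order in which those last two propositions are invoked, which is immaterial.
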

\begin{proof}
In search of contradiction, suppose $x_i=x_j=(u_x,v_y,w_z)$ for distinct $i,j\in\{1,2,\dots,\ell mn\}$.  By Proposition \ref{norepinAk}, this means the vertex $(u_x,v_y,w_z)$ must appear as a row in two different matrices, call them $A^{(k_1)}$ and $A^{(k_2)}$, for some distinct $k_1,k_2\in\left\{1,2,\dots,\frac{\ell mn}{\lcm(\ell,m,n)}\right\}$.  Then, by Proposition \ref{samerows}, any row of $A^{(k_1)}$ is also a row of $A^{(k_2)}$.  So the first row of $A^{(k_1)}$, the seed $s_{k_1}$ is also a row of $A^{(k_2)}$.  However, $s_{k_1}$ cannot be the first row of $A^{(k_2)}$, as first rows of the matrices are distinct by Proposition \ref{seedsdistinct}.  Then $s_{k_1}$ must be some row other than the first row of $A^{(k_2)}$.  But this contradicts Proposition \ref{norepseed}.  Hence, $x_i\neq x_j$.
\end{proof}

Proposition \ref{norepinlist} shows that our list $x_1,x_2,\dots,x_{\ell mn}$ is in one-to-one correspondence with $V(K_\ell\square K_m\square K_n)$, achieving the goal of this section.
\begin{theorem}
The list of vertices $x_1,x_2,\dots,x_{\ell mn}$ defined in Section \ref{definition} is an ordering of the vertices of $K_\ell\square K_m\square K_n$.
\end{theorem}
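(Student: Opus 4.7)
The plan is to leverage the work already done: by construction the list $x_1,\dots,x_{\ell mn}$ contains exactly $\ell mn$ entries, and each entry is plainly a vertex of $K_\ell \square K_m \square K_n$, so the only content of the theorem is that the list exhausts $V(K_\ell \square K_m \square K_n)$ with no repetitions. Since $|V(K_\ell\square K_m\square K_n)| = \ell mn$, injectivity suffices to give surjectivity, and injectivity is precisely Proposition \ref{norepinlist}.

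More concretely, I would first verify the count: we defined $\frac{\ell mn}{\lcm(\ell,m,n)}$ matrices $A^{(k)}$, each having $\lcm(\ell,m,n)$ rows, and the rows contribute the vertices of the list in blocks of $\lcm(\ell,m,n)$. Multiplying gives exactly $\ell mn$ entries, matching $|V(K_\ell\square K_m\square K_n)|$. Each row of $A^{(k)}$ is a triple in $V(K_\ell)\times V(K_m)\times V(K_n)$ by construction (the entries are iterated images under the cyclic permutations $\rho$, $\sigma$, $\tau$), so every $x_h$ is a vertex of the Hamming graph.

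Then I would invoke Proposition \ref{norepinlist} to conclude that $x_1,\dots,x_{\ell mn}$ are pairwise distinct. A list of $\ell mn$ pairwise distinct elements of a set of size $\ell mn$ is in bijection with the set, which is exactly the definition of an ordering given in Section \ref{ordering_section}. Thus the result follows immediately.

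There is no real obstacle here, as all the difficult combinatorial work was done in Propositions \ref{norepinAk}, \ref{samerows}, \ref{seedsdistinct}, \ref{noseedsametype}, Lemma \ref{lemma}, and \ref{norepseed}, which were assembled in Proposition \ref{norepinlist} to yield pairwise distinctness. The theorem is essentially a one-line corollary packaging that pairwise distinctness into the ordering terminology introduced for use in the next section.
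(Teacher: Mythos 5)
Your proposal is correct and matches the paper exactly: the paper likewise observes that the list has $\ell mn$ entries, that $|V(K_\ell\square K_m\square K_n)|=\ell mn$, and that Proposition \ref{norepinlist} supplies pairwise distinctness, so the theorem follows immediately. No differences worth noting.
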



\end{subsection}
\begin{subsection}{$K_{\ell}\square K_{m}\square K_{n}$ is radio graceful}

In this section we will show our ordering of $K_{\ell}\square K_{m}\square K_{n}$ induces a consecutive radio labeling.
\begin{theorem}
Let $\ell ,m,n \in \mathbb{Z}_+$, $\ell \leq m \leq n$, $\ell \geq 2$, $m,n \geq 3$ (excluding $K_2 \square K_3 \square K_3$). Then $K_{\ell}\square K_{m}\square K_{n}$ is radio graceful.
\end{theorem}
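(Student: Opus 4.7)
My plan is to apply Proposition~\ref{prelims_rgc_prop} to the ordering $x_1,\dots,x_{\ell mn}$ constructed in Section~\ref{definition}. Since $\diam(K_\ell\square K_m\square K_n)=3$, and since distance in a Cartesian product of complete graphs is the number of coordinates in which two vertices differ, the radio graceful condition reduces to purely combinatorial assertions: the consecutive entries $x_i,x_{i+1}$ must differ in all three coordinates, and the entries $x_i,x_{i+2}$ must agree in at most one coordinate.

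The pairs $(x_i,x_{i+\Delta})$ split naturally into \emph{interior} pairs, both lying in a single matrix $A^{(k)}$, and \emph{boundary} pairs, straddling a transition from $A^{(k)}$ to $A^{(k+1)}$. For an interior pair, the cyclic structure \eqref{cyclical} gives the $r$-th row of $A^{(k)}$ as $(\rho^{r-1}(u_1),\sigma^{r-1}(v_b),\tau^{r-1}(w_j))$, so rows $r_1$ and $r_2$ agree in the first, second, or third coordinate exactly when $\ell$, $m$, or $n$ divides $r_1-r_2$. The hypotheses $\ell\geq 2$ and $m,n\geq 3$ then immediately give distance $3$ when $\Delta=1$ and distance at least $2$ when $\Delta=2$, the only possible coincidence being the first coordinate in the case $\ell=2$.

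For a boundary pair, write $L=\lcm(\ell,m,n)$. The last row of $A^{(k)}$ is $(\rho^{L-1}(u_1),\sigma^{L-1}(v_b),\tau^{L-1}(w_j))$, whose first entry $u_\ell$ differs from $u_1$ and from $\rho(u_1)=u_2$ because $\ell\geq 2$; thus the first coordinate always disagrees with $x_{i+1}$ and $x_{i+2}$, and only the middle and last coordinates need to be examined. By the recursion \eqref{Ak} I separate Case A ($k+1\not\equiv 1\pmod\lambda$, where only $\mathbf{e}$ advances by $\tau$) from Case B ($k+1\equiv 1\pmod\lambda$, where only $\mathbf{d}$ advances by $\sigma$). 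Running through $\Delta\in\{1,2\}$ against the two cases produces a short list of divisibility conditions $\ell\mid 2$, $m\mid 2$, $n\mid 2$, $m\mid 3$, or $n\mid 3$ which would be needed for a simultaneous agreement in the two unresolved coordinates; under $m,n\geq 3$, these collapse the potential obstructions to the two configurations $\{\ell=2,\,n=3\}$ (Case A with $\Delta=2$) and $\{\ell=2,\,m=3\}$ (Case B with $\Delta=2$).

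The final step is to dispose of these two residual configurations. Under $\ell\leq m\leq n$, forcing $\ell=2$ and $n=3$ squeezes $m$ to $3$, giving exactly the excluded triple $(2,3,3)$. The second residual requires Case B to occur for some $k$, which in turn requires the total number of matrices $\gcd(\ell,m)\cdot\lambda$ to exceed $\lambda$, i.e., $\gcd(\ell,m)\geq 2$; with $\ell=2$ this forces $m$ to be even, contradicting $m=3$. The main obstacle is organizing the four boundary sub-cases and carefully extracting the divisibility conditions from the cyclic exponents on $\rho,\sigma,\tau$; once that bookkeeping is in place, verifying that $(2,3,3)$ is the only unavoidable failure is a short computation.
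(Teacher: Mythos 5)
Your proposal is correct and follows essentially the same route as the paper: express $x_{i+\Delta}$ as $(\rho^2(u_i),\sigma^a(v_i),\tau^b(w_i))$ with $(a,b)$ determined by whether the pair straddles a matrix boundary and which column advances, reduce to divisibility conditions, and kill the $\{\ell=2,\ m=3\}$ obstruction by observing that the $\sigma$-transition ($k+1\equiv 1\pmod\lambda$) never occurs when $\gcd(\ell,m)=1$. One sentence is internally inconsistent --- the claim that the first coordinate of the last row of $A^{(k)}$ ``always disagrees with $x_{i+2}$'' is false when $\ell=2$ (there $u_\ell=u_2=\rho(u_1)$) --- but your subsequent inclusion of $\ell\mid 2$ in the list of possible agreements and your final case analysis handle this correctly, so the argument stands.
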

\begin{proof}
Let $\ell \leq m \leq n$, $\ell \geq 2$, $m,n \geq 3$ with either $\ell \geq 3$ or $n\geq 4$.  Also, let $x_1, x_2,...,x_{\ell mn}$ be the ordering of $V(K_{\ell}\square K_{m}\square K_{n})$ from Section \ref{ordering_section}.  Write $x_i = (u_i, v_i, w_i)$ and assume $x_i\in A^{(k)}$.  We will prove that our ordering satisfies the inequality \eqref{prelims_rgc} with $\Delta \in \{1,2\}$, which will finish the proof.

We begin with the case where $\Delta = 1$.  Note that $x_{i+1} = (\rho(u_i), \sigma(v_i), \tau(w_i))$ (if $x_{i+1}\in A^{(k)}$) and that $x_{i+1} \in \{(\rho(u_i), \sigma^2(v_i), \tau(w_i)), (\rho(u_i), \sigma(v_i), \tau^2(w_i))\}$ (if $x_{i+1}\in A^{(k+1)}$.)  Since $\ell\geq 2$, $\rho(u_i)\neq u_i$ and since $m,n\geq 3$, $v_i, \sigma(v_i),$ and $\sigma^2(v_i)$ are distinct, and similarly for $w_i$.  Thus, $x_i$ and $x_{i+1}$ always differ in all three coordinates, so $d(x_i, x_{i+1})) = 3$, satisfying the radio graceful condition of Proposition \ref{prelims_rgc_prop}.

We henceforth assume $\Delta = 2$.  Then $x_{i+2}$  is either $(\rho^2(u_i), \sigma^2(v_i), \tau^2(w_i))$ (if $x_{i+2}$ lies in $A^{(k)}$) or $x_{i+2}\in \{(\rho^2(u_i), \sigma^3(v_i), \tau^2(w_i)), (\rho^2(u_i), \sigma^2(v_i), \tau^3(w_i))\}$ (if $x_{i+2}\in A^{(k+1)}$).  We now break into cases depending on whether or not $\ell\geq 3$.

 Assume initially that $\ell\geq 3$.  Then the assumption that $\ell \leq m\leq n$ implies $m,n\geq 3$.  Then, $\rho^2(u_i)\neq u_i$, $\sigma^2(v_i)\neq v_i$, and $\tau^2(w_i)\neq w_i$.  Thus, $x_i$ and $x_{i+2}$ differ in at least two coordinates, so $d(x_i, x_{i+2})\geq 2$, satisfying the radio graceful condition of Proposition \ref{prelims_rgc_prop}.

Finally, assume $\ell = 2$, so $n\geq 4$.  This implies that $w_i, \tau^2(w_i)$,  and $\tau^3(w_i)$ are distinct.  If $m\geq 4$ as well, then $v_i, \sigma^2(v_i),$ and $\sigma^3(v_i)$ are distinct.  It follows in this case that $d(x_i, x_{i+2})\geq 2$.

The remaining case is when $\Delta = 2$, $\ell =2$, $m = 3$, and $n\geq 4$.  If $x_{i+2}$ lies in $A^{(k+1)}$, recall that $1\leq k+1 \leq \frac{\ell m n}{\lcm(\ell,m,n)} = \frac{6n}{\lcm(6,n)}$ and $\lambda = \frac{n\lcm(\ell, m)}{lcm(\ell,m,n)} = \frac{6n}{\lcm(6,n)} \geq k+1$.  Thus, we never satisfy that congruence $k+1 \cong 1\pmod{\lambda}$.  It follows that $x_{i+2} \neq (\rho^2(u_i), \sigma^3(v_i), \tau^2(w_i))$.  For the remaining two possibilities for $x_{i+2}$, we clearly have $d(x_i, x_{i+2})= 2$, satisfying the radio graceful condition of Proposition \ref{prelims_rgc_prop}.

\end{proof}
\end{subsection}

\begin{section}{Radio numbers in the exceptional cases}

In this section, we compute the radio numbers of $K_2\square K_3\square K_3$ and  $K_2\square K_2\square K_n$, beginning with  $K_2\square K_3\square K_3$.

To start, we note the ordering in Table \ref{table:sporadic} of the vertices of $K_2\square K_3\square K_3$ has a span of $20$.  Thus, $rn(K_2\square K_3\square K_3)\leq 20$.  We will later see that this ordering achieves the radio number of $K_2\square K_3\square K_3$.

\begin{center}

\begin{table}[h]

\begin{tabular}{|c|c||c|c||c|c|}

\hline

Vertex & Label & Vertex & Label & Vertex & Label\\

\hline

$(u_1,v_1,w_1)$ & $1$ & $(u_2,v_2,w_2)$ & $2$ & $(u_1,v_3,w_3)$ & $3$\\

$(u_2,v_1,w_1)$ & $4$ & $(u_1,v_2,w_2)$ & $5$ & $(u_2,v_3,w_3)$ & $6$ \\

$(u_1,v_1,w_2)$ & $8$ & $(u_2,v_2,w_3)$ & $9$ & $(u_1,v_3,w_1)$ & $10$ \\

$(u_2,v_1,w_2)$ & $11$ & $(u_1,v_2,w_3)$ & $12$ & $(u_2,v_3,w_1)$ & $13$\\

$(u_1,v_1,w_3)$ & $15$ & $(u_2,v_2,w_1)$ & $16$ & $(u_1,v_3,w_2)$ & $17$ \\

$(u_2,v_1,w_3)$ & $18$ & $(u_1,v_2,w_1)$ & $19$ & $(u_2,v_3,w_2)$ & $20$\\

\hline

\end{tabular}

\caption{A radio labeling of $K_2\square K_3\square K_3$}\label{table:sporadic}

\end{table}

\end{center}

\begin{prop} The radio number of $K_2\square K_3\square K_3$ is $20$.

\end{prop}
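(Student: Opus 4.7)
The plan is to show $\rn(K_2\square K_3\square K_3) = 20$ by matching the upper bound of $20$ supplied by the explicit labeling in Table \ref{table:sporadic} with a lower bound of $20$. The upper bound only requires verifying that the tabulated labeling satisfies the radio inequality $|f(u)-f(v)|\ge 4-d(u,v)$ for every pair of distinct vertices, a finite check most efficiently organized by grouping vertex pairs by their graph distance. For the lower bound, I will consider an arbitrary radio labeling $f$, order the $18$ vertices as $x_1,\dots,x_{18}$ with $f(x_1)<\cdots<f(x_{18})$, and set $\delta_i=f(x_{i+1})-f(x_i)\ge 1$; the span is then at least $1+\sum_{i=1}^{17}\delta_i$, so it suffices to show $\sum_{i=1}^{17}\delta_i \ge 19$.

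The central claim is that any maximal run of consecutive indices with $\delta_i=1$ has length at most $5$. To prove this, suppose $\delta_p=\delta_{p+1}=\cdots=\delta_q=1$. Each $\delta_i=1$ forces $d(x_i,x_{i+1})=3$, so consecutive vertices in the run differ in all three coordinates. Because $\ell=2$, the first coordinates of $x_p,x_{p+1},\dots,x_{q+1}$ alternate between $u_1$ and $u_2$, so $x_i$ and $x_{i+2}$ share their first coordinate and hence $d(x_i,x_{i+2})\le 2$. Combined with $\delta_i+\delta_{i+1}=2\ge 4-d(x_i,x_{i+2})$, this forces $d(x_i,x_{i+2})=2$, so the second and third coordinates of $x_i$ and $x_{i+2}$ both differ. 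In $K_3$, the sequence $b_p,\dots,b_{q+1}$ of second coordinates then satisfies $b_{i+1}\ne b_i$, $b_{i+2}\ne b_{i+1}$, and $b_{i+2}\ne b_i$, so $\{b_i,b_{i+1},b_{i+2}\}=\{v_1,v_2,v_3\}$ and necessarily $b_{i+3}=b_i$; the analogous statement holds for the third coordinates. Hence the vertices in the run are periodic of period $\lcm(2,3)=6$, and injectivity of $f$ forces $q-p+2\le 6$, i.e.\ the run length $q-p+1$ is at most $5$.

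Granting this bound, if $r$ denotes the number of indices $i$ with $\delta_i\ge 2$, the $17-r$ indices with $\delta_i=1$ split into at most $r+1$ maximal runs of length $\le 5$, giving $17-r\le 5(r+1)$ and hence $r\ge 2$; therefore $\sum_{i=1}^{17}\delta_i\ge (17-r)+2r=17+r\ge 19$, which yields the lower bound $\rn(K_2\square K_3\square K_3)\ge 20$. The delicate step is the run-length lemma, where both hypotheses are used essentially: $\ell=2$ produces the upper bound $d(x_i,x_{i+2})\le 2$ that makes the radio inequality for the pairs $(x_i,x_{i+2})$ bite, and $m=n=3$ forces the second and third coordinate sequences to cycle with period exactly $3$ once consecutive entries are pairwise distinct. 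The final counting estimate is then elementary, and the bound is sharp, as the labeling in Table \ref{table:sporadic} realizes exactly the extremal pattern of three runs of five $\delta_i=1$'s separated by two positions with $\delta_i=2$.
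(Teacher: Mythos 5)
Your proposal is correct and follows essentially the same strategy as the paper: the upper bound comes from the explicit table, and the lower bound rests on the observation that no seven vertices can carry consecutive labels (your run-length lemma, proved by the same periodicity argument exploiting the alternation of the $K_2$-coordinate and the resulting forced $3$-cycling of the $K_3$-coordinates), after which a counting argument produces at least two label gaps of size $2$. Your pigeonhole count $17-r\le 5(r+1)$ is a slightly more systematic packaging of the paper's "one jump among $x_1,\dots,x_7$ and another among $x_{11},\dots,x_{18}$," but the content is identical.
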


\begin{proof}  As we have already showed the radio number is at most $20$, we must now show $rn(K_2\square K_3\square K_3) \geq  20$.  To do this, consider the following claim $$ \text{$(\ast)$:  There is no consecutive radio labeling on any 7 vertices of $K_2\square K_3\square K_3$.}$$  Believing $(\ast)$, for any vertex labeling $x_1,...,x_{18}$ of $K_2\square K_3\square K_3$, there must be a jump in the labels among the vertices $x_1,...,x_7$ as well as among $x_{11},...,x_{18}$.  But if there are at least two jumps in the labels of the $18$ vertices, then the span must be at least $20$.

We now prove $(\ast)$.  Let $y_1,..., y_{7}$ be $7$ vertices in $K_2\square K_3\square K_3$ and assume for a contradiction that they can be consecutively radio labeled.  This implies that $d(y_i, y_{i+1}) = 3$ and $d(y_i, y_{i+2})\geq 2$.

Say $y_1 = (a,b,c)\in K_2\square K_3\square K_3$.  Then $y_2 = (a',b',c')$ where $a' \neq a$, $b'\neq b$, and $c'\neq c$.  Because $K_2$ only has two elements, $y_3 = (a, b'', c'')$.  Note that $d(y_3,y_2) =3$ implies $b''\neq b'$ and $c''\neq c'$.  Similarly, because the first coordinate of $y_1$ and $y_3$ match, the condition $d(y_1,y_3)\geq 2$ implies that $b''\neq b$ and $c''\neq c$.  Because $K_3$ only has three vertices in it, this means that $y_1$ and $y_2$ completely determine $y_3$.  Now, $y_4$ is determined in the same manner:  $y_4 = (a', b''',c''')$.  But the condition $b''\neq b'''\neq b'$ forces $b''' = b$, and similarly for $c$.  So $y_4 = (a', b, c)$.  Continuing, we find $y_5 = (a,b',c')$, $y_6 = (a',b'',c'')$, and $y_7 = (a,b,c) = y_1$.  Since $y_1\neq y_7$, this is a contradiction.

\end{proof}

We now turn our attention towards computing the radio number of $G_n:=K_2\square K_2\square K_n$.

\begin{prop}

The radio number of $G_n$ satisfies $rn(G_n)\geq 6n-1$.

\end{prop}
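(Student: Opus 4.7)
The plan is to take an arbitrary radio labeling $f$ of $G_n = K_2\square K_2\square K_n$, enumerate the $4n$ vertices as $x_1,\dots,x_{4n}$ in increasing order of label, and set $f_i := f(x_{i+1})-f(x_i)$ for $1\leq i \leq 4n-1$.  Since $f(x_1)\geq 1$, the span is $f(x_{4n}) = f(x_1) + \sum_{i=1}^{4n-1} f_i \geq 1 + \sum_i f_i$, so it suffices to prove $\sum_i f_i \geq 6n-2$.  The heart of the argument will be the uniform pair bound
\[
f_i + f_{i+1} \geq 3 \qquad \text{for every } 1 \leq i \leq 4n-2.
\]
Once this is in hand, I will partition the $4n-1$ gaps into the $2n-1$ disjoint consecutive pairs $(f_1,f_2), (f_3,f_4), \dots, (f_{4n-3},f_{4n-2})$ and the leftover $f_{4n-1}\geq 1$, which gives $\sum_i f_i \geq 3(2n-1) + 1 = 6n-2$ and hence $\rn(G_n) \geq 6n-1$.

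For the pair bound, two radio inequalities are available for any three consecutive vertices.  Summing the radio condition on adjacent pairs yields
\[
f_i + f_{i+1} \geq 8 - d(x_i,x_{i+1}) - d(x_{i+1},x_{i+2}),
\]
while applying it to the outer pair gives $f_i+f_{i+1} = f(x_{i+2})-f(x_i) \geq 4 - d(x_i,x_{i+2})$.  The first inequality already supplies $\geq 3$ unless $d(x_i,x_{i+1}) = d(x_{i+1},x_{i+2}) = 3$, which leaves only the ``doubly-far'' case to handle.

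The doubly-far case is the main obstacle, and it is exactly where the small $K_2 \square K_2$ structure does the work.  Two vertices in $G_n$ sit at distance $3$ precisely when they differ in all three coordinates; because each $K_2$-factor contains only two elements, writing $x_i = (a,b,c)$ forces $x_{i+1} = (1-a,\,1-b,\,c')$ with $c'\neq c$, and then $x_{i+2}$ is forced to have the form $(a,b,c'')$ with $c''\neq c'$.  Distinctness of $x_i$ and $x_{i+2}$ forces $c''\neq c$ as well, so $d(x_i,x_{i+2}) = 1$ and the outer-pair inequality delivers $f_i + f_{i+1}\geq 3$, closing the final case and completing the lower bound $\rn(G_n)\geq 6n-1$.
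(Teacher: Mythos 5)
Your proof is correct and follows essentially the same route as the paper: both arguments reduce to the key fact that $f(x_{i+2})-f(x_i)\geq 3$, proved by observing that two consecutive distance-$3$ steps in $K_2\square K_2\square K_n$ force $x_i$ and $x_{i+2}$ to agree in the first two coordinates and hence lie at distance $1$. Your bookkeeping (pairing the gaps $f_1+f_2,\dots,f_{4n-3}+f_{4n-2}$ and adding $f_{4n-1}\geq 1$ and $f(x_1)\geq 1$) differs only cosmetically from the paper's telescoping sum starting from $f(y_2)\geq 2$.
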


\begin{proof}

We first claim that no three vertices $y_1, y_2, y_3$ can have a consecutive labeling.  If $y_1$ is labeled $(a,b,c)$ and $y_2$ is labeled $(a',b',c')$, then $a\neq a'$, $b\neq b'$, and $c\neq c'$ because $d(y_1,y_2)$ must be equal to $3$ to have a consecutive labeling.  Likewise, $y_3$ is labeled $(a'',b'',c'')$ with $a''\neq a', b''\neq b'$ (which implies $a'' = a$ and $b'' = b$ since $K_2$  has two vertices).  But then $d(y_1,y_3) = 1$, so the labeling can not be consecutive.

Now, let $f:V_{G_n}\rightarrow \mathbb{Z}$ be a radio labeling, which is induced from an ordering $y_1, ..., y_{4n}$ of the vertices of $G_n$.  Since $f(y_{k+2}) - f(y_k) \geq 3$ for any $k$, and because $f(y_2)\geq 2$, we see \begin{align*} f(y_{4n}) &= (f(y_{4n}) - f(y_{4n-2})) + (f(y_{4n-2}) - f(y_{4n-4})) + ... + (f(y_4) - f(y_2)) + f(y_2) \\ &\geq 3(2n-1) + f(y_2) \\ &\geq 6n - 1.\end{align*}

Thus, $rn(G_n)\geq 6n-1$.

\end{proof}

Having established a lower bound for $rn(G_n)$, we now find an ordering whose span achieves this lower bound.

\begin{theorem}  Let $G_n = K_2\square K_2\square K_n$.  Then $rn(G) = 6n-1$.

\end{theorem}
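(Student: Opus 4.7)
The plan is to establish the matching upper bound $\rn(G_n) \leq 6n-1$ by exhibiting a radio labeling of exactly this span. The lower-bound argument showed that $|f(y_i) - f(y_{i+2})| \geq 3$ in any radio labeling, so the tightest possible image has the form $\{1, 2, 4, 5, 7, 8, \ldots, 6n-2, 6n-1\}$. I would therefore fix an ordering $y_1, \ldots, y_{4n}$ of $V(G_n)$ (to be specified below) and set $f(y_{2t-1}) = 3t-2$, $f(y_{2t}) = 3t-1$ for $t = 1, \ldots, 2n$; the span is then automatically $6n-1$, and it suffices to check the radio inequality.

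With this labeling the only non-trivial constraints turn out to be the two local ones: $|f(y_{2t-1}) - f(y_{2t})| = 1$ demands $d(y_{2t-1}, y_{2t}) = 3$, and $|f(y_{2t}) - f(y_{2t+1})| = 2$ demands $d(y_{2t}, y_{2t+1}) \geq 2$. For every other pair of indices the label gap is at least $3 = \diam(G_n)$, and the radio condition collapses to $d \geq 1$, which is automatic for distinct vertices. So the proof reduces to producing an ordering whose consecutive within-pair distances are $3$ and whose between-pair (tail-to-head) distances are at least $2$.

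For $n \geq 3$, I would use a cyclic construction. Writing $V(K_2) = \{u_1, u_2\}$, $V(K_2) = \{v_1, v_2\}$, and $V(K_n) = \{w_1, \ldots, w_n\}$, define for $c = 1, 2, \ldots, n$
\begin{align*}
y_{4c-3} &= (u_1, v_1, w_c), & y_{4c-2} &= (u_2, v_2, w_{c+1}), \\
y_{4c-1} &= (u_1, v_2, w_{c+2}), & y_{4c} &= (u_2, v_1, w_{c+3}),
\end{align*}
with third-coordinate subscripts read modulo $n$. Since in each of the four cosets of the first two coordinates the third coordinate sweeps out a full permutation of $\{w_1, \ldots, w_n\}$, the list is a bona fide ordering. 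One then checks that within every pair all three coordinates change (distance $3$), and in each transition $y_{4c-2} \to y_{4c-1}$ and $y_{4c} \to y_{4c+1}$ the first and third coordinates change (distance $2$). The hypothesis $n \geq 3$ enters only in the last transition, where it ensures $c+3 \not\equiv c+1 \pmod n$.

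The case $n = 2$ has to be handled separately, because the cyclic recipe fails when $2 \equiv 0 \pmod n$. For $G_2 = K_2 \square K_2 \square K_2$ (the $3$-cube) I would instead write down an explicit ordering of the eight vertices, for instance
\[
(u_1, v_1, w_1),\ (u_2, v_2, w_2),\ (u_1, v_1, w_2),\ (u_2, v_2, w_1),\ (u_2, v_1, w_2),\ (u_1, v_2, w_1),\ (u_2, v_1, w_1),\ (u_1, v_2, w_2),
\]
and verify the required local distances by direct inspection. The real obstacle in the whole argument is locating the right ordering (especially reconciling the $n=2$ anomaly with the uniform recipe for $n \geq 3$); once an ordering is in hand, the verification in either case is a short mechanical check.
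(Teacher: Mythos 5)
Your proof is correct, and your upper-bound construction takes a genuinely different route from the paper's. The shared skeleton is the lower bound $\rn(G_n)\geq 6n-1$ from the preceding proposition, the resulting forced label set $\{1,2,4,5,\dots,6n-2,6n-1\}$, and the reduction of the radio condition to the two local requirements $d(y_{2t-1},y_{2t})=3$ and $d(y_{2t},y_{2t+1})\geq 2$ (your explicit remark that every other pair has label gap at least $3=\diam(G_n)$ and so is automatic is left implicit in the paper). The difference is in how the ordering is produced: the paper writes down explicit orderings of $G_2$ and $G_3$ and then inducts from $G_n$ to $G_{n+2}$, appending eight vertices and tracking the form of the final vertices in the inductive hypothesis so the splice can be checked; you give a single closed-form cyclic formula for $n\geq 3$ in which the four $(u,v)$-classes are visited in a fixed rotation while the $K_n$-coordinate advances by one at each step, so each class is swept out bijectively, plus a hand-checked ordering for $n=2$. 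I verified your formula: the list is a bijection onto $V(G_n)$, within-pair distances are $3$, and between-pair distances are $2$ exactly when $2\not\equiv 0\pmod{n}$, which is where your hypothesis $n\geq 3$ enters, just as you say. Your version buys a uniform, directly checkable description with no inductive bookkeeping; the paper's buys very short local verifications at the cost of carrying base cases of both parities and an interface condition. The only loose end is $n=1$, which the theorem statement does not exclude and the paper dispatches in one line (there $\diam(G_1)=2$, so the argument changes); you should either do the same or note the standing assumption $n\geq 2$.
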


\begin{proof}

By the previous proposition, we know $rn(G)\geq 6n-1$, so we need only find an ordering which has a span of $6n-1$.  First note that if $n = 1$, $K_2\square K_2\square K_1\cong K_2\square K_2$ has radio number $5 = 6(1)-1$ coming from the vertex ordering $(u_1,v_1),(u_2,v_2),(u_2,v_1),(u_1,v_2)$, which has labels $1,2,4,5$.  

For $G_2$, we use the ordering $$(u_1,v_1,w_1), (u_1,v_2,w_2), (u_2,v_1,w_1), (u_1,v_2,w_2), (u_2,v_1,w_2), (u_1,v_2,w_1), (u_1,v_1,w_2), (u_2,v_2,w_1).$$  This has labeling $$1,2,4,5,7,8,10,11 = 6(2)-1.$$  Notice that the last two vertices have the form $(u_1,v_1,w_n), (u_2,v_2,w_{n-1})$ with labels $6n-2, 6n-1$.

For $G_3$, we use the ordering $$(u_1,v_1,w_1), (u_2,v_2,w_2), (u_2,v_1,w_1), (v_1,v_2,w_2), (u_2,v_2,w_1), (u_1,v_1,w_3),$$ $$(u_1,v_2,w_1), (u_2,v_1,w_3), (u_1,v_1,w_2), (u_2,v_2,w_3), (u_2,v_1,w_2), (u_1,v_2,w_3)$$ which induces the labeling $$1,2,4,5,7,8,10,11,13,14,16,17 = 6(3)-1.$$  Notice that the last vertex has the form $(u_1,v_2,w_n)$, with label $6n-1$.

We find labelings for the remaining $G_n$ using induction, using both the $G_2$ and $G_3$ labelings as base cases.  For the induction hypothesis, we assume that when $n$ is even, we have found an ordering of the vertices of $G_n$ which ends with $(u_1,v_1,w_n), (u_2,v_2,w_{n-1})$ and with labels $6n-2$ and $6n-1$.  When $n$ is odd, we assume that we have found an ordering for the vertices of $G_n$ which ends with $(u_1,v_2,w_n)$ and label $6n-1$.

Then we order $G_{n+2}$ by copying the order on $G_{n}\subseteq G_{n+2}$ and then appending the remaining vertices in the order $$(u_1,v_1,w_{n+1}), (u_2,v_2,w_{n+2}), (u_2,v_1,w_{n+1}), (u_1,v_2,w_{n+2}), (u_2,v_1,w_{n+2}), (u_1, v_2, w_{n+1}), ( u_1,v_1, w_{n+2}), (u_2,v_2, w_{n+1}).$$  The corresponding labels are then $$6n+1, 6n+2, 6n+4, 6n+5, 6n+7, 6n+8, 6n+10, 6n+11 = 6(n+2) - 1.$$

\end{proof}

\end{section}

\end{document}